\newcommand{\bff}{{\bf f}}
\newcommand{\bc}{{\bf c}}
\newcommand{\bx}{{\bf x}}
\newcommand{\by}{{\bf y}}
\newcommand{\bz}{{\bf z}}
\newcommand{\bA}{{\bf A}}
\newcommand{\bI}{{\bf I}}
\newcommand{\lam}{\lambda}
\newcommand{\Lam}{\Lambda}
\newcommand{\vsig}{\varsigma}
\newcommand{\barvsig}{\bar{\vsig}}
\newcommand{\barmu}{\bar{\mu}}
\newcommand{\barlam}{\bar{\lam}}
\newcommand{\barbz}{\bar{\bz}}
\newcommand{\barby}{\bar{\by}}
\newcommand{\barbx}{\bar{\bx}}
\newcommand{\calS}{{\cal S}}
\newcommand{\calSa}{\calS_a}
\newcommand{\calX}{{\cal X}}
\newcommand{\calXc}{{\calX_c}}
\newcommand{\calY}{{\cal Y}}
\newcommand{\calYc}{{\calY_c}}
\newcommand{\calZ}{{\cal Z}}
\newcommand{\calZc}{{\calZ_c}}
\newcommand{\calP}{{\cal P}}
\newcommand{\calV}{{\cal V}}
\newcommand{\calVa}{{\calV_a}}
\newcommand{\R}{I\!\!R}
\newcommand{\bgra}{{\bf \nabla}}
\newcommand{\eba}{\begin{array}}
\newcommand{\eea}{\end{array}}
\newcommand{\ebe}{\begin{eqnarray}}
\newcommand{\eee}{\end{eqnarray}}
\newcommand{\eb}{\begin{equation}}
\newcommand{\ee}{\end{equation}}
\begin{document}

\title{On the minimal distance between two surfaces}

\author{D.M. Morales Silva$^{\rm a}$$^{\ast}$\thanks{$^\ast$Corresponding %
author. Email: dmoralessilva@ballarat.edu.au \vspace{6pt}} and D. Y. %
Gao$^{\rm a}$\\\vspace{6pt} $^{\rm a}${\em{School of Science, Information Technology and Egineering, University of Ballarat, Victoria 3353}} }


\maketitle

\begin{abstract}
This article revisits previous results presented in \cite{DGAOY} which were challenged in \cite{V-Z}. We aim to use the points of view presented in \cite{V-Z} to modify the original results and highlight that the consideration of the so called Gao-Strang total complementary function is indeed quite useful for establishing necessary conditions for solving this problem.
\end{abstract}

\begin{keywords}
canonical duality; triality theory; global optimization
\end{keywords}

\begin{classcode}
49K99; 49N15
\end{classcode}

\section{Introduction and Primal Problem}

Minimal distance problems between two surfaces arise naturally from many applications, which have been recently studied by both engineers and scientists (see \cite{JOHN-COHEN,PATO-GILLES}). In this article, the problem presents a quadratic minimization problem with equality constraints: we let $\bx:=(\by,\bz)$ and

\eb
(\calP):\ \min\left\{\Pi(\bx)=\frac{1}{2}\|\by-\bz\|^2:\ h(\by)=0,\ g(\bz)=0\right\},
\ee
where $h:\R^n\rightarrow \R$ and $g:\R^n\rightarrow \R$ are defined by
\eb\label{h}
h(\by):=\frac{1}{2}\left(\by^t\bA\by-r^2\right),
\ee
\eb\label{g}
 g(\bz):=\frac{1}{2}\alpha\left(\frac{1}{2}\|\bz-\bc\|^2-\eta\right)^2-\bff^t(\bz-\bc),
\ee
 in which,
 $\bA\in\R^{n\times n}$ is a positive definite matrix, $\alpha,r$ and $\eta$ are positive numbers,
  and $\bff,\bc\in \R^n$ are properly chosen so that these two surfaces
  \[
  \calYc:=\{\by\in\R^n:\ h(\by)=0\}
  \]
  and
  \[
  \calZc:=\{\bz\in\R^n:\ g(\bz)=0\}
   \]
   are disjoint such that  if $\bz\in \calZc$ then $h(\bz)> 0$.
   For example, it can be proved that if $\bc=0$, $r>0$, $\eta>0.5r^2$ and $\|\bff\|<0.5(0.5r^2-\eta)^2/r$ then, $\calYc\cap\calZc=\emptyset$ and if $\bz\in \calZc$ then $h(\bz)> 0$.  Notice that the feasible set $\calXc=\calYc\times\calZc\subset\R^n\times\R^n$, defined by $$\calXc=\{\bx\in\R^n\times\R^n:\ h(\by)=0,\ g(\bz)=0\},$$ is, in general, non-convex.

By introducing Lagrange multipliers $\lam,\mu\in\R$ to relax the two equality constraints in $\calXc$, the classical Lagrangian associated with the constrained problem $(\calP)$ is \eb \label{Lagrangian}
L(\bx,\lam,\mu)=\frac{1}{2}\|\by-\bz\|^2+\lam h(\by)+\mu g(\bz).
\ee

Due to the non-convexity of the constraint $g$, the problem may have multiple local minima.
The identification of the global minimizer has been a fundamentally difficult task in global optimization. The {\em canonical duality theory} is a  newly developed, potentially useful methodology, which is composed mainly of (i) a
\emph{canonical dual transformation}, (ii)  a {\em complementary-dual principle, }
and (iii) an associated \emph{triality theory}. The canonical dual transformation can be used to formulate dual problems
without duality gap; the complementary-dual principle shows that the canonical dual problem is equivalent to
the primal problem in the sense that they have the same set of KKT points;
 while the triality theory can be used to identify both global and local extrema.
 In global optimization, the canonical duality theory has been successfully used for solving many non-convex/non-smooth
 constrained optimization problems, including polynomial minimization \cite{DGAO1,DGAO4},
 concave minimization with inequality constraints \cite{DGAO3}, nonlinear dynamical systems  \cite{ruan-gao-ima},
 non-convex quadratic minimization with spherical \cite{DGAO2},  box \cite{DGAO5}, and integer constraints \cite{FANG}.

In the next section, we will show how to correctly 
use the canonical dual transformation to convert the non-convex constrained problem into a canonical dual problem. 
The global optimality condition is proposed in Section 2. Applications are illustrated in Section 3. 
The global minimizer is uniquely identified by the triality theory proposed in \cite{DGAOB}.

\section{Canonical dual problem}

In order to use the canonical dual transformation method, the key step is to introduce a so-called \emph{geometrical operator} $\xi=\Lambda(\bz)$ and a \emph{canonical function} $V(\xi)$ such that the non-convex function \eb\label{functionW}
W(\bz)=\frac{1}{2}\alpha\left(\frac{1}{2}\|\bz-\bc\|^2-\eta\right)^2
\ee in $g(\bz)$ can be written in the so-called canonical form $W(\bz)=V(\Lambda(\bz))$. 
By the definition introduced in \cite{DGAOB}, a differentiable function 
$V:\calVa\subset\R\rightarrow\calVa^*\subset\R$ is called a \emph{canonical function} if the duality relation 
$\vsig=DV(\xi):\calVa\rightarrow\calVa^*$ is invertible. Thus, for the non-convex function defined by 
\eqref{functionW}, we let $$\xi=\Lam(\bz)=\frac{1}{2}\|\bz-\bc\|^2,$$ then the quadratic function $V(\xi):=\frac{1}{2}\alpha(\xi-\eta)^2$ is a canonical function on the domain $\calVa=\{\xi\in\R:\ \xi\geq 0\}$
 since the duality relation 
 $$\vsig=DV(\xi)=\alpha(\xi-\eta):\calVa\rightarrow\calVa^*=\{\vsig\in\R:\ \vsig\geq -\alpha\eta\}$$ 
 is invertible. By the Legendre transformation, the conjugate function of $V(\xi)$ can be uniquely defined by
  \eb V^*(\vsig)=\{\xi\vsig-V(\xi):\ \vsig=DV(\xi)\}=\frac{1}{2\alpha}\vsig^2+\eta\vsig.
\ee

It is easy to prove that the following canonical relations  
\eb\label{canonicalrelations}
\xi=DV^*(\vsig)\Leftrightarrow\vsig=DV(\xi)\Leftrightarrow V(\xi)+V^*(\vsig)=\xi\vsig 
\ee 
hold in $\calVa\times\calVa^*$. 
Thus, replacing $W(\bz)$ in the non-convex function $g(\bz)$ by $V(\Lam(\bz))=\Lam(\bz)\vsig-V^*(\vsig)$, the non-convex Lagrangian $L(\bx,\lam,\mu)$ can be written in the Gao-Strang \emph{total complementary function} form \eb\label{XI}
\Xi(\bx,\lam,\mu,\vsig)=\frac{1}{2}\|\by-\bz\|^2+\lam h(\by)+\mu (\Lam(\bz)\vsig-V^*(\vsig)-\bff^t(\bz-\bc)).
\ee 
Through this total complementary function, the canonical dual function can be defined by 
\eb\label{XiPId} \Pi^d(\lam,\mu,\vsig)=\left\{\Xi(\bx,\lam,\mu,\vsig): \bgra_{\bx}\Xi(\bx,\lam,\mu,\vsig)=0\right\}.
\ee
Let  the  dual feasible space $\calSa$  be defined by 
\eb\label{setSa}
\calSa:=\{(\lam,\mu,\vsig)\in\R^3:(1+\mu\vsig)(\bI+\lam\bA)-\bI \text{ is invertible}\},
\ee 
where $\bI\in\R^{n\times n}$ is the identity matrix. 
Then the canonical dual function $\Pi^d$ is well defined by \eqref{XiPId}. 
In order to have the explicit form of $\Pi^d$, we need to calculate 
 \[
 \bgra_{\bx}\Xi(\bx,\lam,\mu,\vsig)=\left[\begin{array}{c}
\by-\bz+\lam\bA\by\\
\bz-\by+\mu\vsig(\bz-\bc)-\mu\bff
\end{array}\right].
\]
Clearly, if $(\lam,\mu,\vsig)\in\calSa$ we  have that 
$\bgra_{\bx}\Xi(\bx,\lam,\mu,\vsig)=0$ if and only if 
\eb\label{xbar}
\bx(\lam,\mu,\vsig)=\left[
\begin{array}{c}\mu((1+\mu\vsig)(\bI+\lam\bA)-\bI)^{-1}(\bff+\vsig\bc)\\
\mu(\bI+\lam\bA)((1+\mu\vsig)(\bI+\lam\bA)-\bI)^{-1}(\bff+\vsig\bc)
\end{array}\right].
\ee 
Therefore,
\[
\Pi^d(\lam,\mu,\vsig)=\Xi(\bx(\lam,\mu,\vsig),\lam,\mu,\vsig),
\]
 where $\bx(\lam,\mu,\vsig)$ 
is given by \eqref{xbar}.\\

The stationary points of the function $\Xi$  play a key role in  identifying the global minimizer of $(\calP)$.
 Because of this, let us put in evidence what conditions the stationary points of $\Xi$ must satisfy:

\ebe
\label{grad_x Xi}
& &\bgra_{\bx}\Xi(\bx,\lam,\mu,\vsig)=\left[\begin{array}{c}
\by-\bz+\lam\bA\by\\
\bz-\by+\mu\vsig(\bz-\bc)-\mu\bff
\end{array}\right]=0,\\
\label{grad_lam Xi}
& &\frac{\partial\Xi}{\partial\lam}(\bx,\lam,\mu,\vsig)=h(\by)=0,\\
\label{grad_mu Xi}
& &\frac{\partial\Xi}{\partial\mu}(\bx,\lam,\mu,\vsig)=\Lam(\bz)\vsig-V^*(\vsig)-\bff^t(\bz-\bc),\\
\label{grad_vsig Xi}
& &\frac{\partial\Xi}{\partial\vsig}(\bx,\lam,\mu,\vsig)=\mu(\Lam(\bz)-DV^*(\vsig)).
\eee

The following result can be found in \cite{V-Z}. Their proof will be presented for completeness.

\begin{lemma}\label{mu&lamnot0}
Consider $(\bx,\lam,\mu,\vsig)$ a stationary point of $\Xi$ then the following are equivalent:
\begin{enumerate}
\item[a)] $\mu=0$,
\item[b)] $\lam=0$,
\item[c)] $\bx\notin\calXc$.
\end{enumerate}
\end{lemma}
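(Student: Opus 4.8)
The plan is to prove the chain of implications $a)\Rightarrow b)\Rightarrow c)\Rightarrow a)$ (or some convenient rotation of it), exploiting the four stationarity conditions \eqref{grad_x Xi}--\eqref{grad_vsig Xi} together with the defining properties of the surfaces $\calYc$ and $\calZc$. The starting observation is that condition \eqref{grad_lam Xi} says $h(\by)=0$ always holds at a stationary point, so $\by\in\calYc$ unconditionally; hence $\bx\notin\calXc$ is equivalent to $g(\bz)\neq 0$, i.e. to $\bz\notin\calZc$. Also, from \eqref{grad_vsig Xi}, either $\mu=0$ or $\Lam(\bz)=DV^*(\vsig)$; and combining \eqref{grad_mu Xi} with the canonical relations \eqref{canonicalrelations}, whenever $\Lam(\bz)=DV^*(\vsig)$ one gets $V(\Lam(\bz))+V^*(\vsig)=\Lam(\bz)\vsig$, so that $g(\bz)=\Lam(\bz)\vsig-V^*(\vsig)-\bff^t(\bz-\bc)$ actually equals $W(\bz)-\bff^t(\bz-\bc)=g(\bz)$ — the point being that \eqref{grad_mu Xi} vanishing is then exactly the statement $g(\bz)=0$.

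For $a)\Rightarrow b)$: if $\mu=0$, the second block of \eqref{grad_x Xi} forces $\bz=\by$; substituting into the first block gives $\lam\bA\by=0$, and since $\bA$ is positive definite (hence invertible) this yields $\lam\by=0$. If $\lam\neq0$ then $\by=0$, but then $h(\by)=-\tfrac12 r^2\neq 0$, contradicting \eqref{grad_lam Xi}; so $\lam=0$. For $b)\Rightarrow c)$: if $\lam=0$, the first block of \eqref{grad_x Xi} gives $\by=\bz$, so $\bz\in\calYc$ (because $\by\in\calYc$), whence $h(\bz)=0$; but the standing hypothesis on the data is that any point of $\calZc$ satisfies $h(\bz)>0$, so $\bz\notin\calZc$, i.e. $g(\bz)\neq0$ and therefore $\bx\notin\calXc$. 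For $c)\Rightarrow a)$: argue contrapositively. Suppose $\mu\neq0$. Then \eqref{grad_vsig Xi} gives $\Lam(\bz)=DV^*(\vsig)$, which by the canonical relations is equivalent to $\vsig=DV(\Lam(\bz))$ and to $V(\Lam(\bz))+V^*(\vsig)=\Lam(\bz)\vsig$; plugging this into \eqref{grad_mu Xi} shows $g(\bz)=W(\bz)-\bff^t(\bz-\bc)=0$, i.e. $\bz\in\calZc$. Since $\by\in\calYc$ by \eqref{grad_lam Xi}, we conclude $\bx\in\calXc$, contradicting $c)$; hence $\mu=0$.

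The main obstacle I anticipate is the bookkeeping in $c)\Rightarrow a)$: one must be careful that "$\partial\Xi/\partial\mu=0$" genuinely reproduces the \emph{original} constraint $g(\bz)=0$ rather than some relaxed surrogate, and this is exactly where the canonical relation $V(\xi)+V^*(\vsig)=\xi\vsig$ must be invoked in the direction supplied by \eqref{grad_vsig Xi} when $\mu\neq0$. A secondary point to handle cleanly is the degenerate case $\by=0$ in $a)\Rightarrow b)$, which is ruled out purely by $r>0$; one should state this explicitly rather than leave it implicit. Everything else is a short substitution using invertibility of $\bA$ and the fact that $\by\in\calYc$ comes for free from stationarity in $\lam$.
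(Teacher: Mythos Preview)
Your proposal is correct and follows essentially the same cycle $a)\Rightarrow b)\Rightarrow c)\Rightarrow a)$ as the paper, using the same ingredients (the two blocks of \eqref{grad_x Xi}, the constraint $h(\by)=0$ from \eqref{grad_lam Xi}, disjointness of $\calYc$ and $\calZc$, and the canonical relations together with \eqref{grad_mu Xi}--\eqref{grad_vsig Xi} for the contrapositive step). If anything, your treatment of $a)\Rightarrow b)$ is slightly more careful: the paper writes ``$\|\by\|=r$'' to rule out $\by=0$, which is only literally true when $\bA=\bI$, whereas your argument via $h(0)=-\tfrac12 r^2\neq 0$ works for arbitrary positive definite $\bA$.
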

\begin{proof}
\begin{enumerate}
\item[a) $\rightarrow$ b)] If $\mu=0$, then from \eqref{grad_x Xi} we have $\by=\bz$. This implies that $\lam\bA\by=0$ but $\by\neq 0$ since $\|\by\|=r$ by \eqref{grad_lam Xi} and $\bA$ is invertible, therefore $\lam=0$.
\item[b) $\rightarrow$ c)] If $\lam=0$, then from \eqref{grad_x Xi}, $\by=\bz$ and so $(\by,\bz)\notin\calXc$ because $\calYc\cap\calZc=\emptyset$.
\item[c) $\rightarrow$ a)] Consider the counter-positive form of this statement, namely, if $\mu\neq 0$ then from \eqref{grad_vsig Xi}, $\Lam(\bz)=DV^*(\vsig)$ which combined together with \eqref{canonicalrelations} and \eqref{grad_mu Xi} provides $\bz\in\calZc$. Since $\by\in\calYc$, from \eqref{grad_lam Xi}, it has been proven that $\bx\in\calXc$.
\end{enumerate}
\end{proof}
Now we are ready to re-introduce Theorems 1 and 2 of Gao and Yang (\cite{DGAOY}).

\begin{theorem}\label{compl-dual princ}
(Complementary-dual principle) If $(\barbx,\barlam,\barmu,\barvsig)$ is a stationary point of $\Xi$ such that $(\barlam,\barmu,\barvsig)\in\calSa$ then $\barbx$ is a critical point of $(\calP)$ with $\barlam$ and $\barmu$ its Lagrange multipliers, $(\barlam,\barmu,\barvsig)$ is a stationary point of $\Pi^d$ and \eb \Pi(\barbx)=L(\barbx,\barlam,\barmu)=\Xi(\barbx,\barlam,\barmu,\barvsig)=\Pi^d(\barlam,\barmu,\barvsig).
\ee
\end{theorem}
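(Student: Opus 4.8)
The plan is to verify each of the four claimed equalities in turn and, along the way, extract from the stationarity conditions \eqref{grad_x Xi}--\eqref{grad_vsig Xi} the facts that $\barbx$ is primal-feasible, that it is a KKT point of $(\calP)$, and that $(\barlam,\barmu,\barvsig)$ is stationary for $\Pi^d$. First I would observe that since $(\barlam,\barmu,\barvsig)\in\calSa$ by hypothesis, the stationary point is not one of the degenerate ones: by Lemma~\ref{mu&lamnot0}, the equivalence $\bx\notin\calXc \Leftrightarrow \mu=0$ means that if $\barbx\notin\calXc$ then $\barmu=0$ and $\barlam=0$, which forces $(1+\barmu\barvsig)(\bI+\barlam\bA)-\bI = \bI - \bI = 0$, contradicting $(\barlam,\barmu,\barvsig)\in\calSa$. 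Hence $\barbx\in\calXc$, so $h(\barby)=0$ and $g(\barbz)=0$; equation \eqref{grad_lam Xi} gives $h(\barby)=0$ directly, and \eqref{grad_mu Xi} together with \eqref{grad_vsig Xi} and the canonical relations \eqref{canonicalrelations} recovers $g(\barbz)=0$.

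Next I would show $\barbx$ is a critical point of $(\calP)$ with multipliers $\barlam,\barmu$. The KKT system for $(\calP)$ with Lagrangian \eqref{Lagrangian} asks for $\bgra_{\bx}L(\barbx,\barlam,\barmu)=0$ and $h(\barby)=g(\barbz)=0$. Feasibility is already established. For the gradient condition, I would compute $\bgra_{\bx}L$ and compare with $\bgra_{\bx}\Xi$ from \eqref{grad_x Xi}: these two gradients agree once one uses $\vsig = DV(\Lam(\bz))$, i.e. once $\Lam(\barbz) = DV^*(\barvsig)$, which is exactly \eqref{grad_vsig Xi} combined with $\barmu\neq 0$ and the canonical relations \eqref{canonicalrelations}. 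Indeed, $\nabla_{\bz}\big(\mu\,\Lam(\bz)\vsig\big) = \mu\vsig(\bz-\bc)$ while $\nabla_{\bz}\big(\mu\,W(\bz)\big) = \mu\, DV(\Lam(\bz))(\bz-\bc)$, and these coincide precisely when $\vsig = DV(\Lam(\barbz))$. So the $\bz$-block of \eqref{grad_x Xi} equals the $\bz$-block of $\bgra_{\bx}L(\barbx,\barlam,\barmu)$, and the $\by$-block is literally the same expression; hence $\bgra_{\bx}L(\barbx,\barlam,\barmu)=0$.

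For stationarity of $\Pi^d$, I would use the standard chain-rule argument for the canonical dual function: since $\Pi^d(\lam,\mu,\vsig)=\Xi(\bx(\lam,\mu,\vsig),\lam,\mu,\vsig)$ with $\bx(\lam,\mu,\vsig)$ defined by \eqref{xbar} as the solution of $\bgra_{\bx}\Xi=0$, differentiating gives, for each dual variable $\theta\in\{\lam,\mu,\vsig\}$,
\eb
\frac{\partial \Pi^d}{\partial\theta} = \bgra_{\bx}\Xi\cdot\frac{\partial\bx}{\partial\theta} + \frac{\partial\Xi}{\partial\theta} = \frac{\partial\Xi}{\partial\theta},
\ee
because the first term vanishes by construction of $\bx(\lam,\mu,\vsig)$. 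Evaluated at $(\barlam,\barmu,\barvsig)$, the right-hand sides are \eqref{grad_lam Xi}, \eqref{grad_mu Xi}, \eqref{grad_vsig Xi}, all of which vanish at a stationary point of $\Xi$; hence $(\barlam,\barmu,\barvsig)$ is a stationary point of $\Pi^d$. Finally, the chain of equalities $\Pi(\barbx)=L(\barbx,\barlam,\barmu)=\Xi(\barbx,\barlam,\barmu,\barvsig)=\Pi^d(\barlam,\barmu,\barvsig)$ follows by direct substitution: $L=\Pi$ at $\barbx$ because $h(\barby)=g(\barbz)=0$ kills the multiplier terms; $\Xi=L$ at the stationary point because the canonical relation $V(\Lam(\barbz))+V^*(\barvsig)=\Lam(\barbz)\barvsig$ from \eqref{canonicalrelations} shows $\Lam(\barbz)\barvsig-V^*(\barvsig)=W(\barbz)$, so the bracket in \eqref{XI} reproduces $g(\barbz)$; and $\Xi=\Pi^d$ by the definition \eqref{XiPId} together with \eqref{xbar}.

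The main obstacle I anticipate is the bookkeeping in the second step — carefully matching $\bgra_{\bx}L$ against $\bgra_{\bx}\Xi$ and making sure the substitution $\vsig = DV(\Lam(\bz))$ is legitimately available, which in turn relies on $\barmu\neq 0$ (so that \eqref{grad_vsig Xi} yields $\Lam(\barbz)=DV^*(\barvsig)$ rather than being vacuous). That $\barmu\neq 0$ is guaranteed only because $(\barlam,\barmu,\barvsig)\in\calSa$, so the hypothesis on $\calSa$ is used in an essential way here as well as in establishing feasibility; the rest is routine substitution and the chain rule.
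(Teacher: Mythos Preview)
Your proposal is correct and follows essentially the same route as the paper's own proof: invoke Lemma~\ref{mu&lamnot0} together with the hypothesis $(\barlam,\barmu,\barvsig)\in\calSa$ to rule out $\barmu=\barlam=0$ and force $\barbx\in\calXc$, then read off the KKT conditions and use the chain rule on $\Pi^d(\lam,\mu,\vsig)=\Xi(\bx(\lam,\mu,\vsig),\lam,\mu,\vsig)$ to get dual stationarity and the chain of equalities. If anything, you are more explicit than the paper in justifying why $\bgra_{\bx}L=\bgra_{\bx}\Xi$ at the stationary point via the canonical relation $\barvsig=DV(\Lam(\barbz))$.
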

\begin{proof}
From Lemma \ref{mu&lamnot0}, we must have that $\barlam$ and $\barmu$ are different than zero, otherwise they both will be zero and $(0,0,\vsig)\notin \calSa$ for any $\vsig\in\R$ which contradicts the assumption that $(\barlam,\barmu,\barvsig)\in\calSa$. Furthermore $\barbx\in\calXc$,
 clearly $\barbx$ is a critical point of $(\calP)$ with $\barlam$ and $\barmu$ its Lagrange multipliers and 
 $$\Pi(\barbx)=L(\barbx,\barlam,\barmu)=\Xi(\barbx,\barlam,\barmu,\barvsig).$$ On the other hand, since 
 $(\barlam,\barmu,\barvsig)\in\calSa$, Equations \eqref{xbar} and \eqref{grad_x Xi} are equivalent, therefore it is 
 easily proven that 
 $$\frac{\partial\Xi}{\partial t}(\barbx,\barlam,\barmu,\barvsig)=\frac{\partial\Pi^d}{\partial t}(\barlam,\barmu,\barvsig)=0,$$ 
 where $t$ is either $\lam,\mu$ or $\vsig$.
  This implies that $(\barlam,\barmu,\barvsig)$ is a stationary point of $\Pi^d$ and $$\Xi(\barbx,\barlam,\barmu,\barvsig)=\Pi^d(\barlam,\barmu,\barvsig)$$ 
  The proof is complete.
\end{proof}

Following the canonical duality theory, in order to identify the global minimizer of $(\calP)$, we first need to look at the Hessian of  $ \Xi$:
\eb\label{HessianXi_x}
\bgra^2_{\bx}\Xi(\bx,\lam,\mu,\vsig)=\left[\begin{array}{cc}
\bI+\lam\bA & -\bI \\
-\bI & (1+\mu\vsig)\bI
\end{array}\right].
\ee 
This matrix is positive definite if and only if $\bI+\lam\bA$ and $(1+\mu\vsig)(\bI+\lam\bA)-\bI$ 
are positive definite (see Theorem 7.7.6 in \cite{HORN}). 
With this, we define $\calSa^+\subset\calSa$ as follows: \eb\label{setSa+}
\calSa^+:=\{(\lam,\mu,\vsig)\in\calSa:\ \bI+\lam\bA \succ 0 \text{ and } (1+\mu\vsig)(\bI+\lam\bA)-\bI\succ 0 \}.
\ee

\begin{theorem}\label{necessaryconditions}
Suppose that $(\barlam,\barmu,\barvsig)\in\calSa^+$ is a stationary point of $\Pi^d$. Then $\barbx$ defined by \eqref{xbar} is the only global minimizer of $\Pi$ on $\calXc$.
\end{theorem}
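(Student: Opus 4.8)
The plan is to use the complementary--dual principle to pin $\barbx$ down inside $\calXc$, then turn the positive-definiteness built into $\calSa^+$ into a strict-convexity statement for $\Xi(\cdot,\barlam,\barmu,\barvsig)$, and finally close a one-line weak-duality inequality.

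\emph{Step 1.} Put $\barbx:=\bx(\barlam,\barmu,\barvsig)$ via \eqref{xbar}. Since $(\barlam,\barmu,\barvsig)\in\calSa^+\subset\calSa$, formula \eqref{xbar} already says $\bgra_{\bx}\Xi(\barbx,\barlam,\barmu,\barvsig)=0$; differentiating the identity $\Pi^d(\lam,\mu,\vsig)=\Xi(\bx(\lam,\mu,\vsig),\lam,\mu,\vsig)$ in $\lam,\mu,\vsig$ and cancelling the term $\bgra_{\bx}\Xi\cdot\partial_t\bx$ shows that the $\lam$-, $\mu$- and $\vsig$-partials of $\Xi$ also vanish there, so $(\barbx,\barlam,\barmu,\barvsig)$ is a stationary point of $\Xi$. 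Because $(0,0,\vsig)\notin\calSa$ for all $\vsig$, Lemma \ref{mu&lamnot0} forbids $\barlam=0$ and $\barmu=0$, hence (same lemma) $\barbx\in\calXc$, and Theorem \ref{compl-dual princ} gives that $\barbx$ is a critical point of $(\calP)$ with multipliers $\barlam,\barmu$ and $\Pi(\barbx)=L(\barbx,\barlam,\barmu)=\Xi(\barbx,\barlam,\barmu,\barvsig)$.

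\emph{Step 2 (the hard part).} I would next establish $\barlam>0$ and $\barmu>0$. Writing $\barbx=(\barby,\barbz)$, the first block of \eqref{grad_x Xi} gives $\barbz=(\bI+\barlam\bA)\barby$, whence $\barbz^t\bA\barbz-r^2=2\barlam\|\bA\barby\|^2+\barlam^2\barby^t\bA^3\barby$; since $\barbz\in\calZc$ forces $\barbz^t\bA\barbz=2h(\barbz)+r^2>r^2$, and since $\bI+\barlam\bA\succ0$ keeps $\barlam>-1/\lambda_{\max}(\bA)$ whereas $\barby^t\bA^3\barby\le\lambda_{\max}(\bA)\|\bA\barby\|^2$, the possibility $\barlam\le0$ is excluded, so $\barlam>0$. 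For $\barmu$ I would eliminate $\barby,\barbz$ between the two blocks of \eqref{grad_x Xi}, insert the canonical identities $\barvsig=DV(\Lam(\barbz))$ (from \eqref{grad_vsig Xi}, \eqref{canonicalrelations} and $\barmu\ne0$) and $\bff^t(\barbz-\bc)=V(\Lam(\barbz))$ (from $g(\barbz)=0$), pair the resulting vector identity with $\bA\barby$ or with $\barbz-\bc$, and extract $\operatorname{sign}(\barmu)$, using the membership $(\barlam,\barmu,\barvsig)\in\calSa^+$ to control $1+\barmu\barvsig$ and the geometric separation of $\calYc$ and $\calZc$ to decide which ``branch'' of $\calZc$ contains $\barbz$. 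I expect this to be the genuine obstacle: the sign of $\barmu$ is not forced by any soft argument, it rests on the explicit form of $g$ and $V$ and on the disjointness/positivity hypotheses on the data; everything else is mechanical once $\barmu>0$ is available.

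\emph{Step 3.} By \eqref{HessianXi_x} and the definiteness criterion quoted after it, for $(\barlam,\barmu,\barvsig)\in\calSa^+$ the quadratic $\bx\mapsto\Xi(\bx,\barlam,\barmu,\barvsig)$ is strictly convex on $\R^n\times\R^n$, so (with $\bgra_{\bx}\Xi(\barbx,\barlam,\barmu,\barvsig)=0$) $\barbx$ is its unique global minimiser. Now fix $\bx=(\by,\bz)\in\calXc$. Then $h(\by)=g(\bz)=0$, so $\Pi(\bx)=L(\bx,\barlam,\barmu)$; substituting $g(\bz)=W(\bz)-\bff^t(\bz-\bc)$ with $W=V\circ\Lam$ rewrites this as $L(\bx,\barlam,\barmu)=\Xi(\bx,\barlam,\barmu,\barvsig)+\barmu\bigl(V(\Lam(\bz))+V^*(\barvsig)-\Lam(\bz)\barvsig\bigr)$, and the bracket is $\ge0$ by Fenchel--Young (convexity of $V$, cf.\ \eqref{canonicalrelations}). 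Since $\barmu>0$ and $\barbx$ minimises $\Xi(\cdot,\barlam,\barmu,\barvsig)$,
\eb\label{dualchain}
\Pi(\bx)\ \ge\ \Xi(\bx,\barlam,\barmu,\barvsig)\ \ge\ \Xi(\barbx,\barlam,\barmu,\barvsig)=\Pi(\barbx),
\ee
so $\barbx$ minimises $\Pi$ over $\calXc$. Finally, if some $\bx\in\calXc$ also attains $\Pi(\bx)=\Pi(\barbx)$, then both inequalities in \eqref{dualchain} are equalities, so $\Xi(\bx,\barlam,\barmu,\barvsig)=\Xi(\barbx,\barlam,\barmu,\barvsig)$, and strict convexity forces $\bx=\barbx$; this is the uniqueness claim.
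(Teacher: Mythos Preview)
Your Steps~1 and~3 reproduce the paper's argument essentially verbatim: strict convexity of $\Xi(\cdot,\barlam,\barmu,\barvsig)$ on $\calSa^+$ makes $\barbx$ its unique minimiser, and the Fenchel--Young inequality $V(\xi)+V^*(\barvsig)\ge\xi\barvsig$ yields $\Xi(\bx,\barlam,\barmu,\barvsig)\le L(\bx,\barlam,\barmu)=\Pi(\bx)$ on $\calXc$, with equality at $\barbx$; a contradiction (or your direct chain \eqref{dualchain}) finishes it.

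Your Step~2 is the interesting divergence. You are right that the inequality $\Xi\le L$ unpacks as $L-\Xi=\barmu\bigl(V(\Lam(\bz))+V^*(\barvsig)-\Lam(\bz)\barvsig\bigr)$, so the Fenchel--Young bracket only delivers $\Xi\le L$ when $\barmu\ge0$. The paper's proof simply \emph{asserts} the inequality \eqref{Xi&L} without ever checking the sign of $\barmu$; you are being more scrupulous than the source in flagging this as the genuine obstacle, and your sketch for extracting $\operatorname{sign}(\barmu)$ from the KKT identities plus the geometric separation hypothesis is the right place to look, though it remains a sketch. Two remarks on that step: first, the work you do to get $\barlam>0$ is sound (the chain $h(\barbz)>0\Rightarrow\barlam(2\|\bA\barby\|^2+\barlam\barby^t\bA^3\barby)>0$ together with $\barlam>-1/\lambda_{\max}(\bA)$ does kill $\barlam<0$), but it is never used downstream --- the $\lam h(\by)$ term is identical in $L$ and $\Xi$, so only the sign of $\barmu$ matters for \eqref{dualchain}. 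Second, since the paper itself does not supply a proof that $\barmu>0$, your proposal is already at parity with (indeed slightly ahead of) the published argument; the open point you identify is one the paper leaves open too.
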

\begin{proof}
Since $(\barlam,\barmu,\barvsig)\in\calSa^+$, it is clear that $\barbx\in\calXc$ and is the only global minimizer of 
$\Xi(\cdot,\barlam,\barmu,\barvsig)$. From \eqref{canonicalrelations}, notice that $V$ is a strictly convex function, 
therefore $V^*(\vsig)=\sup\{\xi\vsig-V(\xi):\xi\geq 0\}$ and 
\eb\label{Xi&L}
\Xi(\bx,\barlam,\barmu,\barvsig)\leq L(\bx,\barlam,\barmu),\ \forall \bx\in\R^{n\times n},
\ee 
in particular, $\Xi(\barbx,\barlam,\barmu,\barvsig)= L(\barbx,\barlam,\barmu)$. 
Suppose now that there exists $\bx'\in\calXc\setminus\{\barbx\}$ 
such that
 $$\Pi(\bx')\leq\Pi(\barbx),$$ 
 we would have the following: $$L(\bx',\barlam,\barmu)=\Pi(\bx')\leq\Pi(\barbx)=L(\barbx,\barlam,\barmu),$$
 but because of \eqref{Xi&L} this is equivalent to 
 $$\Xi(\bx',\barlam,\barmu,\barvsig)\leq L(\bx',\barlam,\barmu)\leq L(\barbx,\barlam,\barmu) = \Xi(\barbx,\barlam,\barmu,\barvsig).$$ 
 This contradicts the fact that $\barbx$ is the only global minimizer of $\Xi(\cdot,\barlam,\barmu,\barvsig)$, 
 therefore, we must have that $$\Pi(\barbx)<\Pi(\bx),\ \forall \bx\in\calXc\setminus\{\barbx\}.$$
\end{proof}

\begin{remark}
Notice that Theorem \ref{necessaryconditions} ensures that a stationary point in $\calSa^+$ will give us the only 
solution of $(\calP)$. Therefore, the existence and uniqueness of the solution of $(\calP)$ is necessary in order to 
find a stationary point of $\Pi^d$ in $\calSa^+$. From this it should be evident that the examples provided in 
\cite{V-Z} does not contradict any of the results established under the new conditions of Theorems 
\ref{compl-dual princ} and \ref{necessaryconditions}.
 It is a conjecture proposed in
\cite{DGAO5} that in nonconvex optimization with box/integer constraints, if the canonical dual problem
does not have a critical point in  $\calSa^+$, the primal problem could be NP-hard.  
\end{remark}

\section{Numerical Results}

The graphs in this section were obtained using WINPLOT \cite{PEANUT}.

\subsection{Distance between a sphere and a non-convex polynomial}

Let $n=3$, $\eta=2$, $\alpha=1,$ $\bff=(2,1,1)$, $\bc=(4,5,0)$, $r=2\sqrt{2}$ and $\bA=\bI$. In this case, the sets $\calSa$ and $\calSa^+$ are given by:
\eb\label{SaEx1}
\calSa=\{(\lam,\mu,\vsig)\in\R^3:(1+\mu\vsig)(1+\lam)\neq 1\},
\ee
\eb\label{Sa+Ex1}
\calSa^+=\{(\lam,\mu,\vsig)\in\R^3:1+\lam>0,\ (1+\mu\vsig)(1+\lam)> 1\}.
\ee

Using Maxima \cite{MAXIMA}, we can find the following stationary point of $\Pi^d$ in $\calSa^+$:

$$(\barlam,\barmu,\barvsig)=(0.9502828628898,1.06207786194864,0.30646555192966).$$ Then the global minimizer of $(\calP)$ is given by Equation \eqref{xbar}: $$\barby=\left(\begin{array}{c}
2.161477484004744\\
1.696777196962463\\
0.67004643869564
\end{array}\right),\ \
\barbz=\left(\begin{array}{c}
4.215492495576614\\
3.309195489378083\\
1.306780086728456
\end{array}\right).$$

\begin{figure}[h]
\centering
\includegraphics[scale=.5]{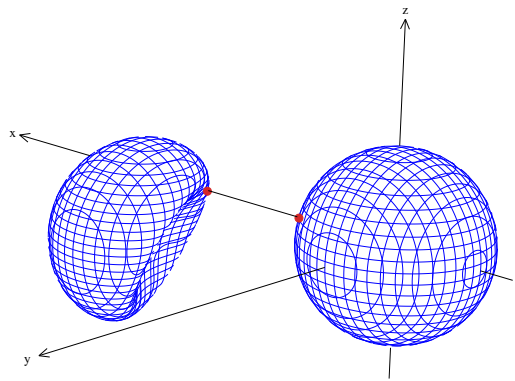}
\caption{Distance between a sphere and a non-convex polynomial}
\label{FigExample1}
\end{figure}

\subsection{Distance between an ellipsoid and a non-convex polynomial}

Let $n=3$, $\eta=2$, $\alpha=1,$ $\bff=(-2,-2,1)$, $\bc=(-4,-5,0)$, $r=2\sqrt{2}$ and $$\bA=\left[\begin{array}{ccc}
3 & 1 & 1 \\
1 & 4 & 1 \\
1 & 1 & 5
\end{array}\right].$$ Using Maxima \cite{MAXIMA}, we can find the following stationary point of $\Pi^d$ in $\calSa^+$: $$(\barlam,\barmu,\barvsig)=(0.84101802234162,1.493808342458642,0.12912817444352).$$ To put in evidence that this stationary point is in fact in $\calSa^+$, notice that the eigenvalues of $\bA$ are given by: \begin{eqnarray*}
\beta_1 = & \frac{4}{\sqrt{3}}\cos\left(\frac{4\pi}{3}+\frac{\theta}{3}\right)+4 & \approx 3.460811127 \\
\beta_2 = & \frac{4}{\sqrt{3}}\cos\left(\frac{2\pi}{3}+\frac{\theta}{3}\right)+4 & \approx 2.324869129 \\
\beta_3 = & \frac{4}{\sqrt{3}}\cos\left(\frac{\theta}{3}\right) +4 \verb|    | & \approx 6.214319743,
\end{eqnarray*} with $\displaystyle\theta=\cos^{-1}\left(\frac{3\sqrt{3}}{8}\right)$. Then, the matrices  $\bI+\barlam\bA$ and  $(1+\barmu\barvsig)(\bI+\barlam\bA)-\bI$ are similar to $$\left[\begin{array}{ccc}
3.910604529727413 & 0 & 0\\
0 & 2.955256837074665  & 0\\
0 & 0 & 6.226354900456345
\end{array}\right]$$ and $$\left[\begin{array}{ccc}
3.664931769065526 & 0 & 0\\
0 & 2.525304438283014  & 0\\
0 & 0 & 6.42737358375643
\end{array}\right]$$ respectively. Finally, the global minimizer of $(\calP)$ is given by Equation \eqref{xbar}: $$\barby=\left(\begin{array}{c}
-1.121270493506938\\
-0.83025443673537\\
0.66262025515374
\end{array}\right),\ \
\barbz=\left(\begin{array}{c}
-4.091279940255224\\
-4.009023330835817\\
1.807730500535487
\end{array}\right).$$

\begin{figure}[h]
\centering
\includegraphics[scale=.4]{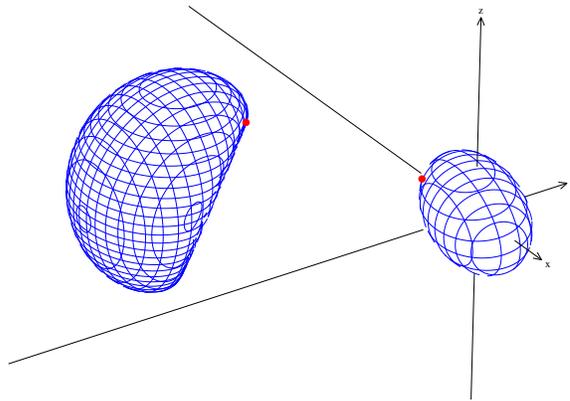}
\caption{Distance between an ellipsoid and a non-convex polynomial}
\label{FigExample2}
\end{figure}

\subsection{Example given in \cite{V-Z}}

Let $n=2$, $\alpha=\eta=1$, $\bc=(1,0)$, $\bff=\left(\frac{\sqrt{6}}{96},0\right)$, $r=1$ and $\bA=\bI$. As it was pointed out in \cite{V-Z}, there are no stationary points in $\calSa^+$. Under the new conditions of Theorem \ref{necessaryconditions}, this is expected since the problem has more than one solution (see figure \ref{FigExample3a}).
\begin{figure}[h]
\centering
\includegraphics[scale=.3]{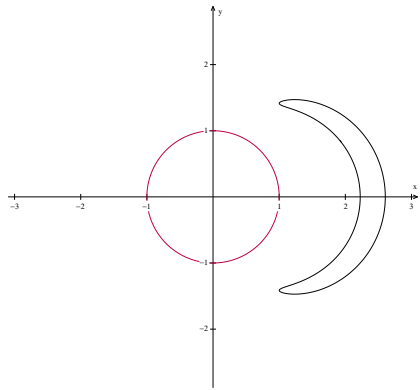}
\caption{Example given in \cite{V-Z}}
\label{FigExample3a}
\end{figure} The following was found (\cite{V-Z}) to be one of the global minimizers of $(\calP)$:
\[
\barby=\left(\begin{array}{c}
0.5872184947\\
0.8094284647
\end{array}\right),\ \
\barbz=\left(\begin{array}{c}
1.012757759\\
1.395996491
\end{array}\right).
\]
Notice that $\calSa$ and $\calSa^+$ are defined as in Equations \eqref{SaEx1} and \eqref{Sa+Ex1}.\\
In order to solve this problem, we will introduce a perturbation. Instead of the given $\bff$, we will consider $\bff_n=\left(\frac{\sqrt{6}}{96},\frac{1}{n}\right)$ for $n>100$.\\ The following table summarizes the results for different values of $n$.\\

\begin{tabular}{|c|c|c|}
\hline
 $n$ & $(\barlam_n,\barmu_n,\barvsig_n)\in\calSa^+$ & $\barbx_n=(\barby_n,\barbz_n)$\\ \hline
64 & (0.2284381,5.319007,-0.0219068) & $\barby=\left(\begin{array}{c}
0.2250312\\
0.9743515
\end{array}\right),\ \barbz=\left(\begin{array}{c}
0.2764370\\
1.1969306
\end{array}\right)$\\
\hline

1000 & (0.6926569,16.01863,-0.0248297) & $\barby=\left(\begin{array}{c}
0.5656039\\
0.8246770
\end{array}\right),\ \barbz=\left(\begin{array}{c}
0.9573734\\
1.3958953
\end{array}\right)$\\
\hline
10000 & (0.7214940,16.42599,-0.0254434) & $\barby=\left(\begin{array}{c}
0.5850814\\
0.8109745
\end{array}\right),\ \barbz=\left(\begin{array}{c}
1.0072142\\
1.3960878
\end{array}\right)$\\
\hline
100000 & (0.7243521,16.46345,-0.0255083) & $\barby=\left(\begin{array}{c}
0.5870050\\
0.8095833
\end{array}\right),\ \barbz=\left(\begin{array}{c}
1.0122034\\
1.3960066
\end{array}\right)$\\
\hline
\end{tabular}

\begin{figure}[h]
  \centering
  {\label{2DPert64}\includegraphics[width=0.3\textwidth]{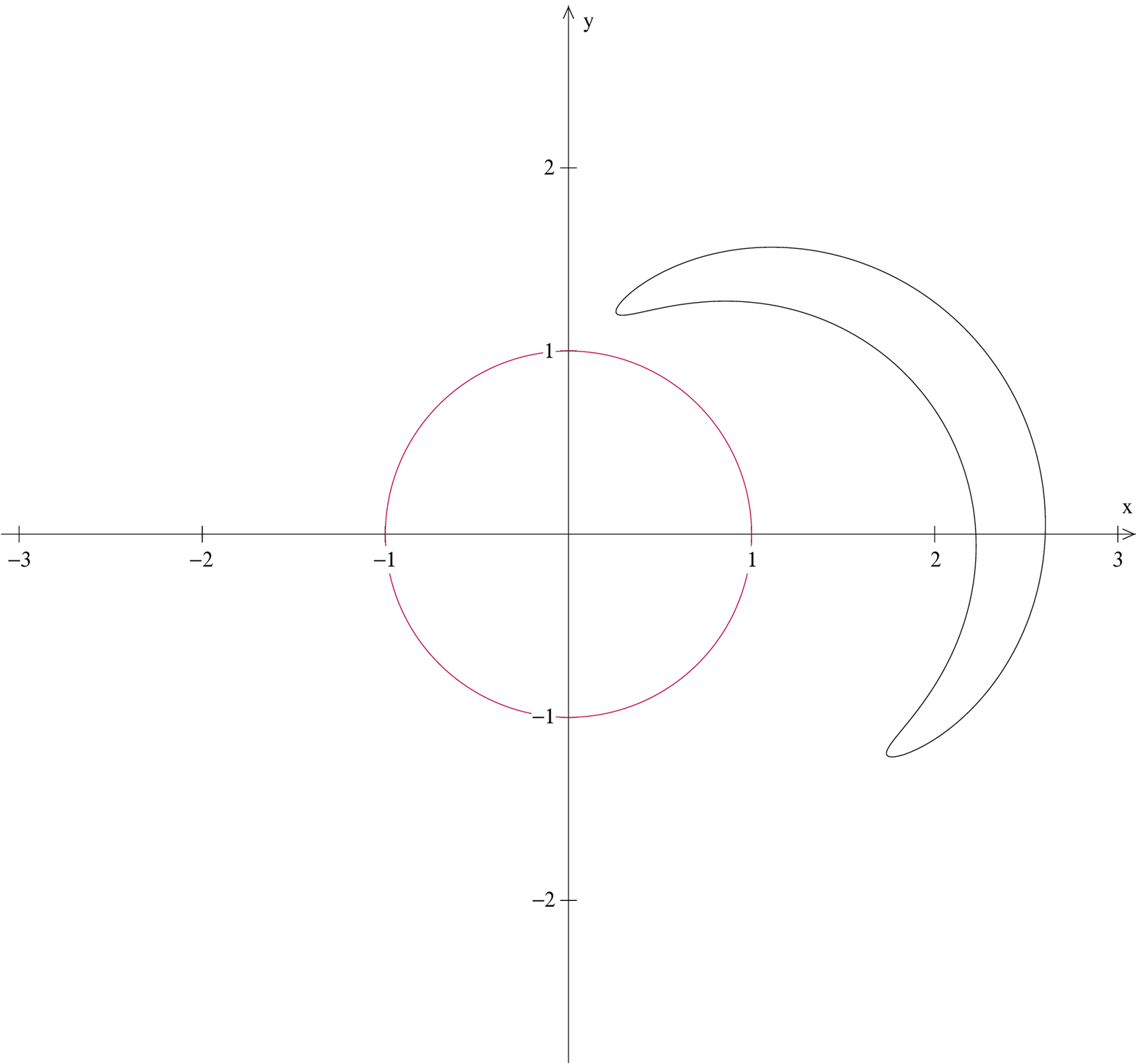}}\quad
 {\label{2DPert100K}\includegraphics[width=0.3\textwidth]{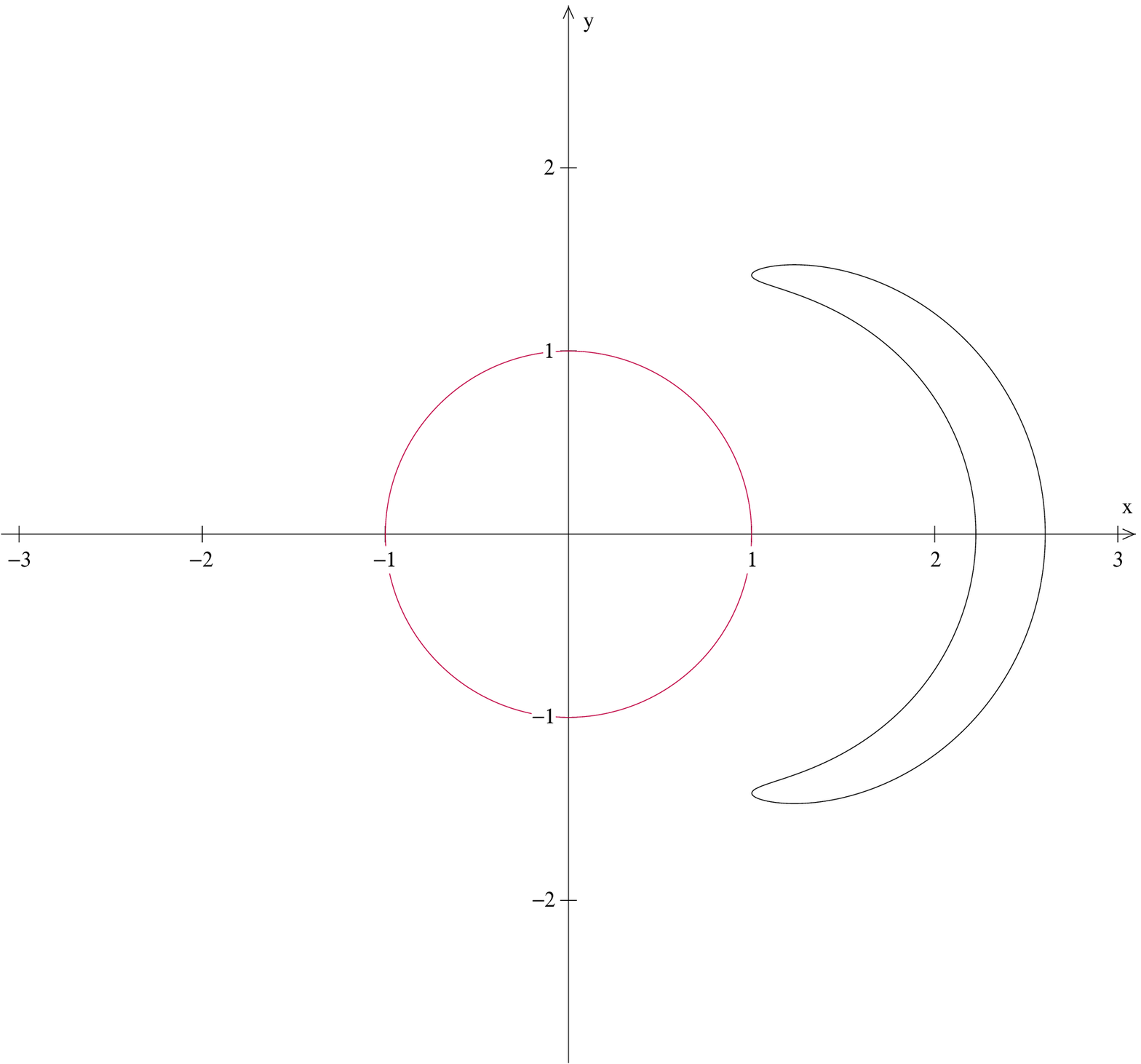}}
  \caption{Perturbations of Example given in \cite{V-Z}, $n=64$ to the left and $n=100000$ to the right.} \label{2DPert}
\end{figure}

\vspace{.3cm}

 \begin{remark}
The combination of the linear perturbation method and canonical duality theory
 for solving nonconvex optimization problems was first proposed in 
 \cite{r-g-j} with successful applications in solving some NP-complete problems \cite{wangetal}.
 High-order perturbation methods for solving integer programming problems were discussed in
 \cite{gao-ruan-jogo08}.
 \end{remark}

\section{Concluding remarks and future research}

\begin{enumerate}
\item[$\bullet$] The total complementary function (Equation \eqref{XI}) is indeed useful for finding necessary conditions for solving $(\calP)$ by means of the Canonical Duality theory.
\item[$\bullet$] The examples presented in \cite{V-Z} do not contradict the new conditions and results presented here.
\item[$\bullet$] As stated by Theorem \ref{necessaryconditions}, in order to use the canonical dual transformation a necessary condition is that $(\calP)$ has a unique solution. The question if this condition is sufficient remains open.
\item[$\bullet$] The combination of the perturbation and the canonical duality theory is an important method 
for solving nonconvex optimization problems which have more than one global optimal solution. 
    \item[$\bullet$] Finding a stationary point of $\Pi^d$ in $\calSa^+$ is not a simple task.
 It is worth to continue studying this problem in order to develop an  efficient algorithm for solving challenging
 problems in global optimization. 
 
\end{enumerate}


\begin{thebibliography}{99}\rm

\bibitem{FANG} Fang, S.-C. et al.
\newblock{\em Canonical dual approach for solving 0-1 quadratic programming problems.}
\newblock J. Ind. Manage. Optim. 4(1), pp. 125-142 (2007).


\bibitem{DGAOB} Gao, D. Y.
\newblock{\em Duality Principles in Nonconvex Systems: Theory, Methods and Applications.}
\newblock Kluwer Academic Publishers, Dordrecht/Boston/London (2000).



\bibitem{DGAO1} Gao, D. Y.
\newblock{\em Perfect duality theory and complete set of solutions to a class of global optimization.}
\newblock Optimization, Vol. 52, Issues 4-5, pp. 467-493 (2003).


\bibitem{DGAO2} Gao, D. Y.
\newblock{\em Canonical duality theory and solutions to constrained non-convex quadratic programming.}
\newblock J. Global Optimization, Vol. 29, pp. 377-399 (2005).


\bibitem{DGAO3} Gao, D. Y.
\newblock{\em Sufficient conditions and perfect duality in non-convex minimization with inequality constraints.}
\newblock J. Indus. Manag. Optim., Vol. 1, pp. 59-69 (2005).


\bibitem{DGAO4} Gao, D. Y.
\newblock{\em Complete Solutions and extremality criteria to polynomial optimization problems.}
\newblock J. Global Optimization, Vol. 35, pp. 131-143 (2006).


\bibitem{DGAO5} Gao, D. Y.
\newblock{\em Solutions and optimality criteria to box constrained non-convex minimization problem.}
\newblock Indus. Manage. Optim., Vol. 3, Issue 3, pp. 293-304 (2007).




\bibitem{gao-ruan-jogo08} Gao,  D.~Y.~  and Ruan, N. 
{\em Solutions to quadratic minimization problems with box and  integer
constraints}, J. Global Optim., 47 pp.~463--484 (2010).


\bibitem{DGAOY} Gao, D. Y.; Yang, Wei-Chi.
\newblock{\em Minimal distance between two non-convex surfaces.}
\newblock Optimization, Vol. 57, Issue 5, pp. 705-714 (2008).




\bibitem{gao-wu-jimo12}Gao, D.Y. and Wu, C.:
On the triality theory for a quartic polynomial optimization problem,
 {\em J. Industrial and Management Optimization}, 8(1):229-242, (2012).

\bibitem{gao-wu-jogo} Gao, D.Y. and Wu, C. :
Triality theory for general  unconstrained
global optimization problems, to appear in {\em J. Global Optimization},


\bibitem{HORN} Horn, R. A.; Johnson, C. R.
\newblock{\em Matrix Analysis.}
\newblock Cambridge University Press (1985).



\bibitem{JOHN-COHEN} Johnson, D. E.; Cohen, E.
\newblock{\em A framework for efficient minimum distance computations.}
\newblock IEEE Proceedings International Conference on Robotics and Automation, Leuven, Belgium, pp. 3678-3684 (1998).


\bibitem{PEANUT} Parris, R.
\newblock {\em Peanut Software Homepage}
\newblock  Version 1.54 (2012). http://math.exeter.edu/rparris/



\bibitem{PATO-GILLES} Patoglu, V.; Gillespie, R. B.
\newblock{\em Extremal distance maintenance for parametric curves and surfaces.}
\newblock Proceedings International Conference on Robotics and Automation, Washington, DC, pp. 2817-2823 (2002).



\bibitem{MAXIMA} Maxima.sourceforge.net.
\newblock {\em Maxima, a Computer Algebra System.}
\newblock Version 5.22.1 (2010). http://maxima.sourceforge.net/



\bibitem{ruan-gao-ima} Ruan, N. and Gao, D.Y.: Canonical duality approach for nonlinear dynamical systems,
{\em IMA J. Appl. Math.}, to appear.

\bibitem{r-g-j}Ruan, N.,  Gao,  D.Y.,  and Jiao, Y. 
{\em Canonical dual least square method for solving general nonlinear
systems of quadratic equations,} { Computational Optimization and Applications},
Vol 47, 335-347 (2010).

\bibitem{V-Z} Voisei, M. D.; Zalinescu, C.
\newblock{\em A counter-example to ``Minimal distance between two non-convex surfaces''.}
\newblock Optimization, Vol. 60, Issue 5, pp. 593-602 (2011).

\bibitem{wangetal} Wang, Z.B., Fang, S.C., Gao, D.Y., Xing, W.X.
 {\em Canonical dual approach to solving the maximum cut problem.}
  {Journal of Global Optimization, } 54, 341-352 (2012).


\end{thebibliography}
\end{document}